\providecommand{\U}[1]{\protect\rule{.1in}{.1in}}
\newtheorem{theorem}{Theorem}[section]
\newtheorem{proposition}[theorem]{Proposition}
\newtheorem{corollary}[theorem]{Corollary}
\newtheorem{example}[theorem]{Example}
\newtheorem{remark}[theorem]{Remark}
\newtheorem{lemma}[theorem]{Lemma}
\newtheorem{final remark}[theorem]{Final Remark}
\newtheorem{definition}[theorem]{Definition}
\begin{document}

\title{On the Schur, positive Schur and weak Dunford-Pettis properties in Fr\'echet lattices}
\author{Geraldo Botelho\thanks{Supported by CNPq Grant
304262/2018-8 and Fapemig Grant PPM-00450-17.}\,\, and  Jos\'e Lucas P. Luiz\thanks{Supported by a CNPq scholarship\newline 2010 Mathematics Subject Classification: 46A40, 46B42, 46A04.\newline Keywords: Banach lattices, Fr\'echet lattices, Schur and positive Schur properties, dual positive Schur property, weak Dunford-Pettis property.
}}
\date{}
\maketitle

\begin{abstract} We prove some general results on sequential convergence in Fr\'echet lattices that yield, as particular instances, the following results regarding a closed ideal $I$ of a Banach lattice $E$: (i) If two of the lattices $E$, $I$ and $E/I$ have the positive Schur property (the Schur property, respectively) then the third lattice has the positive Schur property (the Schur property, respectively) as well; (ii) If $I$ and $E/I$ have the dual positive Schur property, then $E$ also has this property; (iii) If $I$ has the weak Dunford-Pettis property and $E/I$ has the positive Schur property, then $E$ has the weak Dunford-Pettis property. Examples and applications are provided.
\end{abstract}

\section{Introduction}

In the realm of Banach spaces, the Schur property (weakly null sequences are norm null) is a 3-space property in the weak sense that a Banach space $E$ has the Schur property whenever a closed subspace $F$ of $E$ and the quotient space $E/F$ have the Schur property (see, e.g., \cite{castillo}). But it is not a 3-space property in the strong sense that, given a closed subspace $F$ of the Banach space $E$, if two of the spaces $E$, $F$ and $E/F$ have the Schur property, then the third one also has this property. To see that, just remember that $c_0$ is a quotient of $\ell_1$.

In the setting of Banach lattices, for the quotient $E/F$ of a Banach lattice $E$ over a closed subspace $F$ to be a Banach lattice, $F$ should be an ideal of $E$ (see, e.g., \cite{aliprantis2}). So the natural transposition of the concept of 3-space property (in the strong sense) to the realm of Banach lattices reads as follows.

\begin{definition}\rm A property $\cal P$ of Banach lattices is a {\it 3-lattice property} if the following holds: given a closed ideal $I$ of the Banach lattice $E$, if two of the lattices $E$, $I$ and $E/I$ have $\cal P$, the the third one has $\cal P$ as well.
\end{definition}

A Banach lattice has the positive Schur property if weakly null sequences formed by positive vectors are norm null. A lot of research has been done on this property, for some recent contributions see, e.g., \cite{ardakani, baklouti, kaminska, moussa, tradacete, wnuk, wnuk2009, zeekoei}. Among other results, in this paper we prove that, contrary to the case of the Schur property for Banach spaces, the Schur and the positive Schur properties are 3-lattice properties. These results will appear as applications of general results on the Schur and the positive Schur properties in Fr\'echet lattices.

\begin{definition}\rm Given linear topologies $\tau_1$ and $\tau_2$ in a Riesz space $E$, we say that:\\
$\bullet$ $E$ has the {\it $(\tau_1, \tau_2)$-Schur property} (in short, $(\tau_1, \tau_2)$-SP) if $\tau_1$-null sequences in $E$ are $\tau_2$-null.\\
$\bullet$ $E$ has the {\it $(\tau_1, \tau_2)$-positive Schur property} (in short, $(\tau_1, \tau_2)$-PSP) if $\tau_1$-null sequences in $E$ formed by positive elements are $\tau_2$-null.
\end{definition}

Of course, the Schur property is the (weak topology, norm topology)-SP and the positive Schur property is the (weak topology, norm topology)-PSP in a Banach lattice.  Part (a) of the definition above is similar to the approach of Castillo and Sim\~oes to the Schur property (see \cite[Definition 4, Definition 5, Remark 5]{castillo1}).

After proving some general results on the $(\tau_1, \tau_2)$-SP and the $(\tau_1, \tau_2)$-PSP (see Theorems \ref{implret} and \ref{inequot}), we conclude in Corollary \ref{corcor} that the Schur and the positive Schur properties are 3-lattices properties in the context of $\sigma$-Dedekind complete Fr\'echet lattices. Applying these results for Banach lattices we get, in Theorem \ref{3latpro}, that the Schur and the positive Schur properties are 3-lattice properties, and in Proposition \ref{propdpsp} that a Banach lattice $E$ has the dual positive Schur property whenever a closed ideal $I$ and the quotient lattice $E/I$ have this property. A few more applications to Banach lattices and an illustrative example are also provided. In a short final section we show that the methods we use for Fr\'echet lattices can also be helpful in the study of the weak Dunford-Pettis property in Banach lattices (cf. Theorem 3.2 and Corollary 3.3).  

For the basic theory of Banach lattices, Riesz spaces and linear topologies on Riesz spaces we refer \cite{aliprantis, aliprantis1, aliprantis2, meyer}

\section{The Schur and the positive Schur properties}


According to \cite{aliprantis1}, a {\it Fr\'echet lattice} is a Riesz space endowed with a locally convex-solid (hence linear), metrizable and complete topology. We just recall that a locally convex topology on a Riesz space $E$ is locally convex-solid if it is generated by a family of Riesz seminorms, or, equivalently, if the origin has a basis of neighborhoods formed by convex and solid sets (see \cite[Section 3.3]{aliprantis2}).

  If $F$ is a Riesz subspace of the Risez space $E$ endowed with the linear topologies $\tau_1$ and $\tau_2$, we consider in $F$ the corresponding relative topologies, still denoted by $\tau_1$ and $\tau_2$. So, if $E$ has the $(\tau_1, \tau_2)$-SP ($(\tau_1, \tau_2)$-PSP, respectively), then $F$ has the $(\tau_1, \tau_2)$-SP ($(\tau_1, \tau_2)$-PSP, respectively)  as well.

Given a subspace $M$ of a topological vector space $(X, \tau)$, by
$\dot \tau$ we denote  the quotient topology on $X/M$, which is the finest topology on $X/M$ making the quotient operator $\pi \colon X \longrightarrow X/M$ continuous. It is well known that $\dot \tau$ is a linear topology on $X/M$ (see, e.g., \cite[I.2.2]{schaefer}).

Given an ideal $I$ of the Riesz space $E$, the order in $E/I$ is given by: $\dot x\leq \dot y$ if there are $x_1\in \dot x$ and $y_1\in\dot y$ such that $x_1\leq y_1$ (see \cite[pp. 99, 100]{aliprantis2} and \cite[p. 16]{aliprantis1}).

The symbol $x_n \stackrel{\tau}{\longrightarrow}x$ means that the sequence $(x_n)_n$ converges to $x$ with respect to the topology $\tau$.

\begin{theorem}\label{implret}
Let $\tau_1$ and $\tau_2$ be linear topologies on the Riesz space $E$ such that  $(E,\tau_2)$ is a Fr\'echet lattice and $E$ has the $(\tau_2,\tau_1)$-SP. Let $I$ be a 
$\tau_2$-closed ideal of $E$ such that $I$ has the $(\tau_1,\tau_2)$-PSP ($(\tau_1,\tau_2)$-SP, respectively) and $E/I$ has the $(\dot\tau_1,\dot\tau_2)$-PSP ($(\dot \tau_1,\dot \tau_2)$-SP, respectively). Then $E$ has the $(\tau_1,\tau_2)$-PSP ($(\tau_1,\tau_2)$-SP, respectively).
\end{theorem}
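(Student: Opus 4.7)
My plan is to handle both cases in parallel, starting from a $\tau_1$-null sequence $(x_n)\subseteq E$ (consisting of positive elements in the PSP case). Since the quotient map $\pi\colon E\to E/I$ is a lattice homomorphism and is continuous from $(E,\tau_1)$ to $(E/I,\dot\tau_1)$, the sequence $\pi(x_n)$ is $\dot\tau_1$-null (and positive in the PSP case), so the hypothesis on $E/I$ gives $\pi(x_n)\stackrel{\dot\tau_2}{\longrightarrow}0$. Because $(E,\tau_2)$ is Fr\'echet, there is a translation-invariant metric $d$ generating $\tau_2$, and the quotient pseudometric $\dot d(\pi(x),0)=\inf_{z\in I}d(x,z)$ generates $\dot\tau_2$. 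Choosing $z_n\in I$ with $d(x_n,z_n)\le \dot d(\pi(x_n),0)+1/n$ and setting $u_n:=x_n-z_n$ gives $\pi(u_n)=\pi(x_n)$ and $u_n\stackrel{\tau_2}{\longrightarrow}0$.

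For the SP case, $w_n:=x_n-u_n=z_n\in I$; the $(\tau_2,\tau_1)$-SP of $E$ yields $u_n\stackrel{\tau_1}{\to}0$, hence $w_n\stackrel{\tau_1}{\to}0$ inside $I$; the $(\tau_1,\tau_2)$-SP of $I$ then gives $w_n\stackrel{\tau_2}{\to}0$, so $x_n=w_n+u_n\stackrel{\tau_2}{\to}0$.

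For the PSP case the main obstacle is that $x_n-u_n$ need not be positive, preventing a direct appeal to the PSP of $I$. To bypass this I first replace $u_n$ by $u_n^+$: since $\pi$ is a lattice homomorphism and $\pi(x_n)\ge 0$, $\pi(u_n^+)=\pi(u_n)^+=\pi(x_n)^+=\pi(x_n)$, and the inequality $0\le u_n^+\le |u_n|$ combined with the locally solid nature of $\tau_2$ ensures $u_n^+\stackrel{\tau_2}{\to}0$. Assuming now $u_n\ge 0$, I would use the lattice identity
$$x_n=(x_n\wedge u_n)+(x_n-u_n)^+,$$
whose summands are both nonnegative. Solidness of $\tau_2$ and $0\le x_n\wedge u_n\le u_n$ give $x_n\wedge u_n\stackrel{\tau_2}{\to}0$, so the $(\tau_2,\tau_1)$-SP of $E$ yields $x_n\wedge u_n\stackrel{\tau_1}{\to}0$, and hence $(x_n-u_n)^+=x_n-(x_n\wedge u_n)\stackrel{\tau_1}{\to}0$. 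Since $x_n-u_n\in I$ and $I$ is an ideal, $(x_n-u_n)^+\in I$; the $(\tau_1,\tau_2)$-PSP of $I$ therefore gives $(x_n-u_n)^+\stackrel{\tau_2}{\to}0$, and consequently $x_n\stackrel{\tau_2}{\to}0$, as required.
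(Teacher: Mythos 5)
Your proof is correct, and while it keeps the paper's overall skeleton --- push the sequence into $E/I$, apply the hypothesis there, lift back through the invariant quotient metric to approximators in $I$, convert $\tau_2$-smallness into $\tau_1$-smallness via the $(\tau_2,\tau_1)$-SP of $E$, and close with the hypothesis on $I$ --- it deviates in two genuinely different, and in both cases tidier, ways. First, your direct selection of $z_n\in I$ with $d(x_n,z_n)\le \dot d(\pi(x_n),\dot 0)+1/n$ treats the whole sequence at once; the paper instead fixes an arbitrary subsequence, extracts a further subsequence along which the infima are small, and must finish with the ``every subsequence has a $\tau_2$-null subsequence, hence the sequence is $\tau_2$-null'' argument, which again uses metrizability of $\tau_2$. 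Your choice makes that closing step unnecessary. Second, the positivity repair in the PSP case uses a different decomposition: the paper replaces its approximators $y_m\in I$ by $y_m^+$ via the inequality $|x-y^+|\le |x-y|$, valid for $x\ge 0$, and applies the $(\tau_1,\tau_2)$-PSP of $I$ to $(y_m^+)_m$; you instead take positive parts on the other side, verifying correctly that $\pi(u_n^+)=\pi(u_n)^+=\pi(x_n)$ since $\pi$ is a Riesz homomorphism and $\pi(x_n)\ge 0$, and then split $x_n=(x_n\wedge u_n)+(x_n-u_n)^+$, applying the PSP of $I$ to $\left((x_n-u_n)^+\right)_n$, which indeed lies in $I$ because $x_n-u_n\in\ker\pi=I$ and ideals are stable under $v\mapsto v^+$. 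The identity itself is sound: it follows from $x\wedge u+x\vee u=x+u$ and $(x-u)^+=x\vee u-u$. Your route costs two appeals to local solidity (the dominations $0\le u_n^+\le |u_n|$ and $0\le x_n\wedge u_n\le u_n$, both legitimate since $\tau_2$ is generated by Riesz seminorms) but avoids quoting the inequality $|x-y^+|\le|x-y|$, and it makes completely transparent both where solidity enters and how the two nonnegative pieces of $x_n$ recombine; the paper's version is marginally shorter once that inequality is quoted. Your SP case matches the paper's remark that the Schur case is the same argument without the positivity repair.
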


\begin{proof}
Let us prove first the case of the $(\tau_1,\tau_2)$-PSP.
Let $(x_n)_n\subseteq E$ be a sequence such that $x_n \geq 0$ for every $n$ and $x_n \stackrel{\tau_1}{\longrightarrow}0$, and let $(x_{n_j})_j$ be an arbitrary subsequence of $(x_n)_n$. The continuity of the quotient operator $\pi\colon (E, \tau_1)\longrightarrow (E/I, \dot \tau_1)$ gives
$$\dot x_{n_j}:=\pi(x_{n_j})\stackrel{\dot\tau_1}{\longrightarrow}0 {\rm ~in~} E/I,$$
and the fact that $\pi$ is a Riesz homomorphism  \cite[Theorem 2.22]{aliprantis2} gives $\dot x_{n_j} \geq 0$ for every $n$. Calling on the $(\dot \tau_1, \dot \tau_2)$-PSP of  $E/I$, we get ${\dot{x}_{n_j}}\stackrel{\dot \tau_2}{\longrightarrow}0 $ in $E/I$. Since $(E,\tau_2)$ is a Fr\'echet lattice, in particular $\tau_2$ is a metrizable topology, hence it is generated by an invariant metric $d\colon E\times E\longrightarrow [0,\infty)$ (see \cite[Theorem 5.10]{aliprantis}). Therefore, the quotient topology  $\dot \tau_2$ in $E/I$ is metrizable as well, and, in particular, it is generated by the invariant metric
\begin{align*}
\dot{d}\colon E/I\times E/I &\longrightarrow [0,\infty)~,~\dot{d}(\dot{x},\dot{z}):=\inf\{d(x-z,y): y\in I\},
\end{align*}
(see \cite[Theorem 1.41]{rudin}).
Since ${\dot{x}_{n_j}}\stackrel{\dot \tau_2}{\longrightarrow}0$ if and only if  
$\inf\{d(x_{n_j},y): y\in I\}\longrightarrow 0$, for every  $m\in\mathbb{N}$ there exists $ N_m\in\mathbb{N}$ such that
 $$\inf\{d(x_{n_j},y): y\in I\}<\frac{1}{m} \textrm{~for every~} j\geq N_m.
$$
  So, for $m=1$ there exists  $N_1\in\mathbb{N}$ and $y_{N_1}\in I$ such that $d(x_{n_{N_1}}, y_{N_1})<1$, for  $m=2$ there exists $N_2>N_1$ and $y_{N_2}\in I$ such that $d(x_{n_{N_2}}, y_{N_2})<\frac{1}{2}$, and so on.
%
In this fashion we construct a sequence $(y_{N_m})_m$ in $I$ and a subsequence $(x_{n_{N_m}})_m$ of $(x_{n_j})_j$ such that
$$d(x_{n_{N_m}}- y_{N_m},0)=d(x_{n_{N_m}}, y_{N_m})<\frac{1}{m} {\rm ~for~every~} m\in\mathbb{N}.$$ It follows that $x_{n_{N_m}}- y_{N_m}\stackrel{\tau_2}{\longrightarrow} 0$ em $E$.
As $\tau_2$ is a metrizable locally convex-solid topology on $E$, 
combining \cite[Lemma 5.75]{aliprantis} and \cite[Theorem 2.25]{aliprantis1} we conclude that $\tau_2$ is generated by a sequence $(\rho_k)_k$ of Riesz seminorms. This gives that
$\rho_k(x_{n_{N_m}}- y_{N_m})\longrightarrow 0$ for every $k\in\mathbb{N}$, and applying the inequality $|x_{n_{N_m}}- y_{N_m}^+|\leq |x_{n_{N_m}}- y_{N_m}|$ (see \cite[p. 8]{aliprantis2}), where $ y_{N_m}^+$ is the positive part of  $y_{N_m}$, we get
$$\rho_k(x_{n_{N_m}}- y_{N_m}^+)\longrightarrow 0 {\rm ~for~ every~} k\in\mathbb{N},$$
from which it follows that 
$$x_{n_{N_m}}-y_{N_m}^+\stackrel{\tau_2}{\longrightarrow} 0 {\rm ~in~} E.$$
The $(\tau_2, \tau_1)$-SP of $E$ gives  $x_{n_{N_m}}-y_{N_m}^+\stackrel{\tau_1}{\longrightarrow} 0$ in $E$, and from the linearity of $\tau_1$ we get $$y_{N_m}^+=x_{n_{N_m}}-(x_{n_{N_m}}-y_{N_m}^+)\stackrel{\tau_1}{\longrightarrow} 0 {\rm ~in~} E,$$ which gives $y_{N_m}^+\stackrel{\tau_1}{\longrightarrow}0$ em $I$.
 Since $I$ has the $(\tau_1, \tau_2)$-PSP and each $y_{N_m}^+ \in I$ is positive, we have $y_{N_m}^+\stackrel{\tau_2}{\longrightarrow} 0$ em $I$,
  hence $y_{N_m}^+\stackrel{\tau_2}{\longrightarrow} 0$ em $E$. Now the linearity of $\tau_2$ gives $$x_{n_{N_m}}=(x_{n_{N_m}}-y_{N_m}^+)-y_{N_m}^+\stackrel{\tau_2}{\longrightarrow}0 {\rm ~in~} E.$$
 We have proved that every subsequence of $(x_n)_n$ admits a further subsequence that is $\tau_2$-null. This is enough to conclude that $(x_n)_n$ is itself $\tau_2$-null, proving that $E$ has the $(\tau_1, \tau_2)$-PSP.

The proof of the case of the $(\tau_1, \tau_2)$-SP follows the same steps, actually it is a bit easier because there is no need to pass to the positive part of $y_{N_m}$.
\end{proof}

In the particular case for the Schur property where $\tau_1$ is the weak topology on $E$ with respect to the linear topology $(E,\tau_2)$, we have $\tau_1 \subseteq \tau_2$, hence  $E$ has the $(\tau_2,\tau_1)$-SP. This means that, for this particular choice of $\tau_1$, the part about the Schur property of the theorem above collapses to \cite[Proposition 6]{castillo1} (see also \cite[Theorem 6.1.a]{castillo}) due to J. Castillo and M.A. Sim\~oes, whose proof was our first inspiration.

To proceed we need the following lemma. Given a Riesz  space $E$ endowed with a linear topology $\tau$, by $w_{\tau}$ we denote the weak topology on $E$ with respect to the topology $\tau$. It is well known that $w_{\tau}$ is a linear topology on $E$ (see, \cite{narici, schaefer}).

\begin{lemma} \label{coplinf} If $(E,\tau)$ is a locally convex Hausdorff Riesz space with the   $(w_{\tau},\tau)$-PSP, then $(E,\tau)$ does not contain a lattice copy of  $\ell_\infty$, meaning that there is no Riesz homomorphism from $\ell_\infty$ to $E$ that is a homeomorphism onto its range.
\end{lemma}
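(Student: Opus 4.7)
The plan is to argue by contradiction: suppose there exists a Riesz homomorphism $T\colon \ell_\infty \to E$ that is a homeomorphism onto its range, and transport a witness to the failure of the positive Schur property from $\ell_\infty$ into $E$, contradicting the $(w_\tau,\tau)$-PSP of $(E,\tau)$.

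First I would single out, in $\ell_\infty$, the sequence $(e_n)_n$ of canonical unit vectors. Each $e_n$ is positive, $\|e_n\|_\infty = 1$, and the sequence is weakly null: any $g\in(\ell_\infty)^*$ restricts to a continuous functional on $c_0$, which under $(c_0)^*\cong\ell_1$ corresponds to some $(a_k)_k\in\ell_1$, so $g(e_n)=a_n\to 0$. Next I would push $(e_n)_n$ through $T$. Continuity of $T\colon(\ell_\infty,\|\cdot\|_\infty)\to(E,\tau)$ implies $\phi\circ T\in(\ell_\infty)^*$ for every $\tau$-continuous linear functional $\phi$ on $E$, hence $\phi(T(e_n))=(\phi\circ T)(e_n)\to 0$. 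Thus $T(e_n)\stackrel{w_\tau}{\longrightarrow}0$ in $E$, and since $T$ is a Riesz homomorphism and $e_n\geq 0$, the vectors $T(e_n)$ are positive. By the $(w_\tau,\tau)$-PSP of $(E,\tau)$ we conclude $T(e_n)\stackrel{\tau}{\longrightarrow}0$.

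Finally, as $T$ is a homeomorphism onto its range, the inverse $T^{-1}\colon (T(\ell_\infty),\tau)\to(\ell_\infty,\|\cdot\|_\infty)$ is continuous, so $e_n=T^{-1}(T(e_n))\to 0$ in $\ell_\infty$, contradicting $\|e_n\|_\infty=1$. The only mildly delicate point is the verification that $(e_n)_n$ is weakly null in $\ell_\infty$ (handled by the Hahn--Banach restriction to $c_0$ above); the rest is a routine transfer argument using only the continuity of $T$ and $T^{-1}$ and the fact that $T$ preserves positivity.
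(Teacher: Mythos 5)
Your proof is correct, and while it reaches the same contradiction through the same witness (the canonical unit vectors of $c_0\subseteq\ell_\infty$), it is organized in the opposite direction from the paper's. The paper pulls the property back: it first shows that $T(\ell_\infty)$ inherits the $(w_\tau,\tau)$-PSP, invoking \cite[Theorem 8.12.2]{narici} to identify the weak topology of the subspace $T(\ell_\infty)$ with the restriction of $w_\tau$ (this is where local convexity is used), and then uses the weak--weak continuity of $T$ and $T^{-1}$ \cite[Theorem 8.11.3(c)]{narici} together with their positivity to conclude that $\ell_\infty$ itself would have the PSP, contradicting the failure of the PSP witnessed by the unit vectors. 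You instead push the witness forward: you verify by hand that $(e_n)_n$ is weakly null in $\ell_\infty$ (your restriction-to-$c_0$ argument is correct, though no Hahn--Banach theorem is involved --- restriction of functionals is automatic; extension is what would need Hahn--Banach), transport it through $T$ using only the trivial observation that $\phi\circ T\in(\ell_\infty)^*$ for each $\tau$-continuous $\phi$, apply the PSP of $E$ directly, and finish with the continuity of $T^{-1}$. What your inlined version buys is economy of hypotheses: you never use local convexity or the Hausdorff assumption on $\tau$, so your argument proves the lemma for an arbitrary linear topology, whereas the paper's proof genuinely needs local convexity for the Narici subspace theorem. What the paper's version buys is a reusable intermediate fact --- that the $(w_\tau,\tau)$-PSP passes to subspaces and through positive topological lattice embeddings --- which fits the pattern of transfer arguments used elsewhere in the paper (e.g., in the proof of Theorem \ref{inequot}, where the heredity of the PSP to $I^d$ is needed for a subspace that is not given as the range of any map from a fixed sequence space).
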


\begin{proof} Suppose that there exists a Riesz homomorphism   $T\colon \ell_\infty\longrightarrow E$ that is a homeomorphism onto $T(\ell_\infty)$. As $\tau$ is locally convex, by \cite[Theorem 8.12.2]{narici} we know that the weak topology on $T(\ell_\infty)$ with respect to $\tau$ is the the weak topology $w_{\tau}$ on $E$ restricted to $T(\ell_\infty)$, which we still denote by $w_\tau$. Combining this with the $(w_{\tau},\tau)$-PSP of $E$, it follows that $T(\ell_\infty)$ has the $(w_{\tau},\tau)$-PSP as well. Since $T$ and $T^{-1} \colon T(\ell_\infty) \longrightarrow \ell_\infty$ are positive operators (obvious
) and weak-weak continuous \cite[Theorem 8.11.3(c)]{narici}, we have that $\ell_\infty$ has the PSP. This contradiction completes the proof (to see that $\ell_\infty$ fails the PSP, note, e.g., that $c_0$ is a Banach sublattice of $\ell_\infty$ \cite[p. 12, Example iii)]{meyer} and the canonical unit vectons in $c_0$ form a non-norm null weakly null sequence of positive vectors in $c_0$).
\end{proof}

Remember that if $(E,\tau)$ is a Riesz space with a linear topology, then the weak topology $w_{\dot \tau}$ on $E/I$ with respect to the space  $(E/I, \dot \tau)$ coincides with the quotient topology $\dot {w_{\tau}}$ with respect to the space $(E, w_{\tau})$ (see \cite[Theorem 8.12.3(a)]{narici}).

Remember also that a Riesz space $E$ has the {\it projection property} if every band in $E$ is a projection band; and a locally solid Riesz space $(E, \tau)$ has the {\it Lebesgue property} if $x_\alpha\stackrel{\tau}{\longrightarrow}0$ whenever $x_\alpha\downarrow 0$ in $E$ (here $(x_\alpha)_\alpha$ is a net in $E$). More details can be found in \cite{aliprantis1}.

\begin{theorem}\label{inequot}
 Let $\tau$ be a locally convex-solid Hausdorff complete topology on the Dedekind $\sigma$-complete Riesz space $E$. If $E$ has the $(w_\tau,\tau)$-PSP ($(w_\tau,\tau)$-SP, respectively), then, regardless of the $\tau$-closed ideal $I$ of $E$, the quotient space $E/I$ has the $(w_{\dot \tau}, \dot \tau)$-PSP ($(w_{\dot \tau}, \dot \tau)$-SP, respectively).
\end{theorem}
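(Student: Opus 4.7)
My plan is to reduce the theorem to a band decomposition of $E$. Concretely, I aim to show that under the hypotheses the $\tau$-closed ideal $I$ is in fact a projection band, so that $E = I\oplus I^d$ both as a Riesz space and topologically, and then transfer the positive Schur property from $E$ down to the sublattice $I^d$ and, via the resulting Riesz and topological isomorphism $E/I\cong I^d$, to $E/I$.

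The first and most delicate step is to upgrade the $(w_\tau,\tau)$-PSP of $E$ to sharp order-theoretic consequences. Lemma \ref{coplinf} gives that $(E,\tau)$ contains no lattice copy of $\ell_\infty$. Combined with the Dedekind $\sigma$-completeness of $E$ and the fact that $\tau$ is a complete locally convex-solid Hausdorff topology, the locally solid analogue of Meyer--Nieberg's theorem (see the relevant results in \cite{aliprantis1}) yields that $\tau$ has the Lebesgue property. In this setting $E$ is in fact Dedekind complete, every band is a projection band, and the Lebesgue property forces each $\tau$-closed ideal to be order closed, hence a band. Let $P\colon E\to I$ denote the band projection and $Q:=\mathrm{id}_E-P\colon E\to I^d$ its complement; the Lebesgue property makes $P$ and $Q$ continuous positive operators. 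Moreover, $\pi|_{I^d}\colon I^d\to E/I$ is a Riesz isomorphism and a homeomorphism between $\tau|_{I^d}$ and $\dot\tau$.

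For the PSP case, take $(\dot x_n)\subseteq E/I$ with $\dot x_n\geq 0$ and $\dot x_n\stackrel{w_{\dot\tau}}{\longrightarrow}0$. Since $\pi$ is a Riesz homomorphism, $\pi(x_n^+) = \dot x_n^+ = \dot x_n$ for any lift $x_n$, so we obtain positive representatives; put $z_n := Q(x_n^+)\geq 0$. Then $\pi(z_n) = \pi(x_n^+) - \pi(P(x_n^+)) = \dot x_n$, because $P(x_n^+)\in I$. The crucial verification is that $z_n\stackrel{w_\tau}{\longrightarrow}0$ in $E$: for any $\phi\in(E,\tau)^*$, the functional $\phi\circ Q\in(E,\tau)^*$ vanishes on $I$ and so factors through $\pi$ as a $\dot\tau$-continuous functional $\tilde\phi$ on $E/I$, whence $\phi(z_n) = \tilde\phi(\dot x_n)\to 0$. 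Applying the $(w_\tau,\tau)$-PSP of $E$ to the positive $w_\tau$-null sequence $(z_n)$ gives $z_n\stackrel{\tau}{\longrightarrow}0$, and by continuity of $\pi$ we conclude $\dot x_n = \pi(z_n)\stackrel{\dot\tau}{\longrightarrow}0$. The SP case is the same argument with all appeals to positivity deleted and $z_n := Q(x_n)$ for any lift $x_n$.

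The step I expect to be the main obstacle is the first one: extracting the Lebesgue property (and hence Dedekind completeness and continuity of band projections) from the sole hypothesis of the $(w_\tau,\tau)$-PSP via the no-$\ell_\infty$-copy conclusion of Lemma \ref{coplinf}. In the Banach lattice setting this is Meyer--Nieberg's classical theorem, but here one needs its extension to complete locally convex-solid Riesz spaces, which is precisely where the assumptions that $E$ be Dedekind $\sigma$-complete and that $\tau$ be complete do the real work. Once that structural payload is in hand, the transfer through the band decomposition $E=I\oplus I^d$ is routine.
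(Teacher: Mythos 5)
Your proposal is correct and follows essentially the same route as the paper's proof: Lemma \ref{coplinf} rules out a lattice copy of $\ell_\infty$, the locally solid Meyer--Nieberg-type result \cite[Theorem 3.29]{aliprantis1} yields the Lebesgue property, whence $E$ is Dedekind complete with the projection property, the $\tau$-closed ideal $I$ is a (projection) band, and the property transfers through $E=I\oplus I^d$ with $E/I\cong I^d$. The only differences are cosmetic --- you verify the transfer by hand via the band projection $Q$ and factoring functionals through $\pi$, where the paper simply cites the topological lattice isomorphism $(E/I,\dot\tau)\cong(I^d,\tau)$ from \cite[p. 115]{aliprantis1}, and note that continuity of $P$ and $Q$ already follows from local solidity (since $|Px|\leq|x|$) rather than from the Lebesgue property.
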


\begin{proof} Assume that $E$ has the $(w_\tau,\tau)$-PSP. 
As $\tau$ is locally convex, by Lemma \ref{coplinf} we know that $(E,\tau)$ does not contain a lattice copy of $\ell_\infty$. Theorem 3.29 in \cite{aliprantis1} tells us that $(E,\tau)$ has the Lebesgue property, thus $E$ is Dedekind complete by  \cite[Theorem 3.24]{aliprantis1}. We conclude that $E$ has the projection property by \cite[Theorem 1.42]{aliprantis2}.

Let $I$ be a $\tau$-closed ideal of $E$. Since $(E,\tau)$ is locally solid with the Lebesgue property, calling \cite[Theorem 3.7]{aliprantis1} we have that $I$ is a band, hence a projection band, which means that $E=I\oplus I^d$, where $
I^d=\{x\in E: ~x\perp y ~\textrm{for every}~y\in I\}$. Using again that the weak topology on $I^d$ with respect to the $\tau$ is the restriction of the weak topology $w_\tau$ on $E$, the $(w_\tau,\tau)$-PSP of $E$ implies that $I^d$ has the $(w_\tau,\tau)$-PSP too. 
It follows from \cite[p. 115]{aliprantis1} that $(E/I,\dot \tau)$ is lattice isomorphic to $(I^d,\tau)$, so 
$E/I$ has the $(w_{\dot \tau}, \dot \tau)$-PSP.

The case of the $(w_\tau,\tau)$-SP is similar. First observe that the $(w_\tau,\tau)$-SP is inherited by Riesz subspaces and is preserved by isomorphisms between locally convex spaces. Next follow the steps above and, as $(E/I,\dot \tau)$ is lattice isomorphic to $(I^d,\tau)$ and this latter space has the $(w_\tau,\tau)$-SP, then $E/I$ has the  $(w_{\dot \tau}, \dot \tau)$-SP.
\end{proof}

As to the assumptions of the theorem above, recall that, as in the case of Banach spaces, a linear subspace of a locally convex space is closed if and only if it is weakly closed \cite[Theorem 8.8.1]{narici}.

Combining Theorems \ref{implret} and \ref{inequot} we get the following:

\begin{corollary}\label{corcor} The (weak, strong)-positive Schur property and the (weak, strong)-Schur property are  3-lattice properties in the context of $\sigma$-Dedekind complete Fr\'echet lattices in the following sense: if $I$ is a closed ideal of a $\sigma$-Dedekind complete Fr\'echet lattice $(E, \tau)$ such that two out of the three following conditions hold: \\
\indent{\rm (i)} $E$ has the $(w_\tau, \tau)$-PSP ($(w_\tau, \tau)$-SP, respectively),\\
\indent{\rm (ii)} $I$ has the $(w_\tau, \tau)$-PSP  ($(w_\tau, \tau)$-SP, respectively),\\
\indent{\rm (iii)} $E/I$ has the $(\dot w_\tau, \dot \tau)$-PSP ($(\dot w_\tau, \dot \tau)$-SP, respectively);\\
then the third condition holds too.
\end{corollary}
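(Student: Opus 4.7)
The plan is to assemble the corollary from Theorems \ref{implret} and \ref{inequot}, together with the subspace inheritance recorded earlier in the section, by specializing $\tau_1 = w_\tau$ and $\tau_2 = \tau$ and invoking the identification $\dot{w_\tau} = w_{\dot \tau}$ noted in the remark after Lemma \ref{coplinf}. Three implications must be checked.

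First I would deduce (ii)+(iii)$\,\Rightarrow\,$(i) as a direct application of Theorem \ref{implret} with $\tau_1 = w_\tau$ and $\tau_2 = \tau$. The hypotheses of that theorem unfold cleanly: $(E,\tau)$ is a Fr\'echet lattice by assumption; the required $(\tau,w_\tau)$-SP of $E$ is automatic since $w_\tau \subseteq \tau$, so every $\tau$-null sequence is a fortiori $w_\tau$-null; the ideal $I$ is $\tau$-closed by hypothesis; condition (ii) is literally the $(w_\tau,\tau)$-PSP (respectively SP) of $I$; and the identification $\dot{w_\tau} = w_{\dot\tau}$ converts condition (iii) into the $(\dot{w_\tau},\dot\tau)$-PSP (respectively SP) of $E/I$ demanded by the theorem. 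Theorem \ref{implret} then delivers (i).

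Next I would observe that hypothesis (i) alone already forces both (ii) and (iii), which disposes of the remaining two pairings. That (i) implies (iii) is exactly Theorem \ref{inequot}, whose hypotheses are satisfied because a $\sigma$-Dedekind complete Fr\'echet lattice carries, by definition, a locally convex-solid, metrizable (hence Hausdorff), complete topology on a Dedekind $\sigma$-complete Riesz space. That (i) implies (ii) follows from the subspace inheritance recorded just before Theorem \ref{implret}, together with the fact that local convexity of $\tau$ makes the relative weak topology on $I$ coincide with $w_\tau|_I$ via Hahn-Banach; so the $(w_\tau,\tau)$-PSP/SP of $E$ passes to the Riesz subspace $I$ in its relative topology.

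There is no genuine obstacle in the argument: the corollary is a bookkeeping synthesis of the two previous theorems. The only points that warrant attention are the two topological identifications $\dot{w_\tau} = w_{\dot\tau}$ on the quotient and $(w_\tau)|_I = w_{\tau|_I}$ on the subspace, both of which are standard facts about weak topologies in locally convex spaces and are already cited in the paper (Narici-Beckenstein, Theorems 8.12.2 and 8.12.3(a)).
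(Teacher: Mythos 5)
Your proposal is correct and follows exactly the route the paper intends: the corollary is stated as a direct combination of Theorems \ref{implret} (giving (ii)+(iii) $\Rightarrow$ (i) with $\tau_1 = w_\tau$, $\tau_2 = \tau$, where the $(\tau, w_\tau)$-SP is automatic from $w_\tau \subseteq \tau$) and \ref{inequot} (giving (i) $\Rightarrow$ (iii)), with (i) $\Rightarrow$ (ii) coming from the subspace inheritance and the identification of the relative weak topology noted earlier in the section. Your three-case bookkeeping, including the identifications $\dot{w_\tau} = w_{\dot\tau}$ and $(w_\tau)|_I = w_{\tau|_I}$ via the cited results of Narici--Beckenstein, is precisely the argument the paper leaves implicit.
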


The notion of 3-lattice properties in Banach lattices was defined in the Introduction.

\begin{theorem}\label{3latpro} The Schur and the positive Schur properties are 3-lattice properties.
\end{theorem}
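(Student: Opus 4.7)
The plan is to handle the three implications making up the 3-lattice property separately, using the general results already proved. Write $\tau_2=\|\cdot\|$ and $\tau_1=w$ for the norm and weak topologies of the Banach lattice $E$. Because $I$ is a closed ideal, its relative weak topology coincides with its own weak topology as a Banach lattice (by Hahn--Banach), while on $E/I$ the quotient topology $\dot\tau_2$ is the usual quotient norm and $\dot w$ agrees with the weak topology of $E/I$ (as noted just before Corollary \ref{corcor}). Consequently, the classical PSP and SP of each of $E$, $I$, $E/I$ really are the $(w,\|\cdot\|)$-PSP and $(w,\|\cdot\|)$-SP appearing in the previous theorems, so everything is set up for transfer.

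The implication ``$I$ and $E/I$ have the property $\Longrightarrow$ $E$ has it'' follows at once from Theorem \ref{implret}: $(E,\|\cdot\|)$ is a Fr\'echet lattice and norm convergence implies weak convergence, so $E$ automatically satisfies the $(\tau_2,\tau_1)$-SP required as an ambient hypothesis of that theorem. The implication ``$E$ and $E/I$ have the property $\Longrightarrow$ $I$ has it'' is immediate from the observation made in Section 2 that PSP and SP pass to Riesz subspaces, since a closed ideal is in particular a closed Riesz subspace.

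The delicate case is ``$E$ and $I$ have the property $\Longrightarrow$ $E/I$ has it'', because Theorem \ref{inequot} (equivalently, Corollary \ref{corcor}) requires the ambient lattice to be Dedekind $\sigma$-complete and a general Banach lattice is not. I would bridge this gap with the following observation: a Banach lattice $E$ with PSP has order-continuous norm, and is therefore Dedekind complete. To see it, take a disjoint positive order-bounded sequence $0\le x_n\le y$ in $E$; the partial sums $\sum_{k\le n}x_k$ equal the lattice suprema $x_1\vee\cdots\vee x_n$ and so are themselves dominated by $y$, hence for every $f\in E^{*}_{+}$ we get $\sum_{n}f(x_n)\le f(y)$, which forces $f(x_n)\to 0$ on all of $E^{*}$ and makes $(x_n)$ weakly null. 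PSP then yields $\|x_n\|\to 0$, which is the classical characterisation of order-continuous norm, and order continuity in turn implies Dedekind completeness. The SP case is subsumed since SP $\Rightarrow$ PSP. With $E$ now known to be Dedekind complete, Theorem \ref{inequot} applied with $\tau=\|\cdot\|$ delivers the $(w,\|\cdot\|)$-PSP (respectively SP) of $E/I$, finishing the third case. The only step that is not routine identification of topologies is the passage PSP $\Rightarrow$ Dedekind completeness, which I expect to be the main obstacle of the argument.
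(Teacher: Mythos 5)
Your proposal is correct, and its skeleton is the same as the paper's: identify the classical (positive) Schur properties with the $(w_\tau,\tau)$- and $(\dot w_\tau,\dot\tau)$-versions, establish that a Banach lattice with the (positive) Schur property is Dedekind complete, and then invoke the Fr\'echet-lattice machinery (your three cases are exactly the content of Corollary \ref{corcor}, i.e.\ Theorem \ref{implret}, heredity to Riesz subspaces, and Theorem \ref{inequot}). The one genuine divergence is the bridge to Dedekind completeness. The paper gets it by citation chaining: a lattice with the positive Schur property contains no lattice copy of $c_0$ (since $c_0$ fails the property), hence is a KB-space by \cite[Theorem 4.60]{aliprantis2}, hence has order continuous norm, hence is Dedekind complete. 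You instead prove order continuity directly: an order-bounded disjoint positive sequence has partial sums $x_1+\cdots+x_n=x_1\vee\cdots\vee x_n\le y$, so $\sum_n f(x_n)\le f(y)$ for $f\ge 0$ (and $f=f^+-f^-$ handles general $f$, a step you should make explicit), whence the sequence is weakly null, norm null by the PSP, and the classical disjoint-sequence characterization of order continuity (Meyer-Nieberg, Theorem 2.4.2) applies. Your route is more elementary and self-contained, essentially reproving the known fact "PSP implies order continuous norm" from Wnuk's survey, while the paper's is shorter via KB-space theory; both are valid. A small bonus of your case-by-case treatment: it makes explicit that $\sigma$-Dedekind completeness of $E$ is needed, and available, only in the implication where $E$ itself has the property (the quotient case via Theorem \ref{inequot}), whereas in the implication that $I$ and $E/I$ imply $E$, Theorem \ref{implret} needs no completeness hypothesis --- a point the paper's blanket appeal to Corollary \ref{corcor} leaves implicit, since there $E$ is not yet known to be Dedekind complete.
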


\begin{proof} Since $c_0$ fails the positive Schur property, Banach lattices with the Schur/positive Schur property do not contain a copy of $c_0$, hence they are KB-spaces \cite[Theorem 4.60]{aliprantis2}. But KB-spaces have order continuous norms \cite[p. 232]{aliprantis2} and Banach lattices with order continuous norms are Dedekind complete \cite[Corollary 4.10]{aliprantis2}. So, Banach lattices with the Schur/positive Schur property are Dedekind complete. Of course, in Banach lattices the Schur property is the (weak, norm)-SP and the positive Schur property is the (weak,norm)-PSP, so the result follows from Corollary \ref{corcor}.
\end{proof}

Next we give applications and examples regarding the results obtained thus far.

\begin{proposition}
Let $E$ and $F$ be Banach lattices and $T\colon E\longrightarrow F$ be a Riesz homomorphism with closed range. If $\ker(T)$ and $F$ have the positive Schur property (Schur property, respectively), then $E$ has the positive Schur property (Schur property, respectively).
\end{proposition}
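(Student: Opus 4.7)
The plan is to identify this as a direct application of Theorem \ref{3latpro} applied to the closed ideal $I := \ker(T)$ of $E$. The work consists of (a) verifying that $I$ is indeed a closed ideal, and (b) showing that $E/I$ inherits the positive Schur property (respectively, Schur property) from $F$ via the closed-range hypothesis.

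First I would observe that any Riesz homomorphism between Banach lattices is continuous (a standard automatic continuity result; see, e.g., \cite{aliprantis2}), so $I = \ker(T)$ is norm-closed. Moreover, the kernel of a Riesz homomorphism is an ideal, since $|x|\leq |y|$ and $T(y)=0$ imply $|T(x)| = T(|x|) \leq T(|y|) = 0$, hence $T(x)=0$. Thus $I$ is a closed ideal of $E$.

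Next I would analyze the range: since $T$ is a Riesz homomorphism, $T(E)$ is a Riesz subspace of $F$, and by hypothesis it is norm-closed. Therefore $T(E)$ is a Banach sublattice of $F$, and since the (positive) Schur property is inherited by Banach sublattices, $T(E)$ has the (positive) Schur property. The induced map
\[
\widetilde{T}\colon E/I \longrightarrow T(E), \quad \widetilde{T}(\dot x) = T(x),
\]
is a well-defined algebraic isomorphism, and by the open mapping theorem it is a topological isomorphism of Banach spaces. It is also a Riesz isomorphism: $\widetilde{T}$ is a Riesz homomorphism because $T$ is, and conversely, if $\widetilde{T}(\dot x) \geq 0$ in $T(E)$, then writing $T(x) = T(x^+) - T(x^-)$ and using that $T(x^+) \wedge T(x^-) = T(x^+ \wedge x^-) = 0$, one obtains $T(x^-) = 0$, i.e.\ $x^- \in I$, so $\dot x = \dot x^+ \geq 0$ in $E/I$. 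Consequently $E/I$ and $T(E)$ are lattice-isometrically isomorphic (up to equivalent norms), and since the (positive) Schur property is clearly preserved under such isomorphisms, $E/I$ has the (positive) Schur property.

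Having $I$ and $E/I$ both endowed with the (positive) Schur property, Theorem \ref{3latpro} immediately gives that $E$ has the (positive) Schur property, completing the proof. The one point that requires a little care, and which I expect to be the main (albeit minor) obstacle, is confirming that $\widetilde{T}$ is a Riesz \emph{isomorphism} and not merely a topological isomorphism that happens to be a Riesz homomorphism; but this follows from the computation above using that $T$ is a lattice homomorphism.
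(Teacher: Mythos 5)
Your proposal is correct and follows essentially the same route as the paper: identify $\ker(T)$ as a closed ideal, show via the open mapping theorem that the induced map $E/\ker(T)\longrightarrow T(E)$ is a lattice isomorphism onto the Banach sublattice $T(E)$, and apply Theorem \ref{3latpro}. Your extra verification that $\widetilde{T}^{-1}$ is positive is harmless but unnecessary, since any bijective Riesz homomorphism automatically has a Riesz homomorphism as inverse, which is implicitly how the paper concludes.
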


\begin{proof} Let us prove the case of the positive Schur property. As a Banach sublattice of $F$, $T(E)$ has the positive Schur property. We believe it is well known that $E/\ker(T)$ is lattice isomorphic to $T(E)$, but since we have found no reference to quote, we give a short reasoning. Since $T$ is a Riesz homomorphism,   $\ker(T)$ is an ideal de $E$ \cite[p. 94]{aliprantis2}), and the continuity of $T$ (Riesz homomorphisms are positive hence continuous) guarantees that $\ker(T)$ is closed. So $E/\ker(T)$ is a Banach lattice. The operator
$$ S\colon E/\ker(T)\longrightarrow T(E)~,~S(\dot x) =T(x),$$
is an isomorphism between Banach spaces such that $S\circ \pi=T$, where $\pi\colon E\longrightarrow E/\ker(T)$ is the quotient operator \cite[Theorem 1.7.14]{megginson}. From the fact that $T$ and $\pi$ are Riesz homomorphisms it follows easily that $S$ is a Riesz homomorphism as well, so $E/\ker(T)$ is lattice isomorphic to $T(E)$. Therefore $E/\ker(T)$ has the positive Schur property and Theorem \ref{3latpro} gives that $E$ has the positive Schur property. The case of the Schur property is identical.
\end{proof}

On the one hand, higher order duals $E^{**}, E^{***}, \ldots$ of infinite dimensional Banach spaces $E$ never have the Schur property (this follows from \cite[Corollary 11]{mujica}). On the other hand, the bidual of an infinite dimensional Banach lattice with the positive Schur property may have the positive Schur property. For example, every AL-space has the positive Schur property \cite[Examples 1.3]{rabiger} and the bidual of an AL-space is an AL-space as well \cite[Theorem 4.23]{aliprantis2}. We believe that, contrary to the case of the Schur property in Banach spaces, it is not easy to give concrete examples of Banach lattices $E$ with the positive Schur property such that $E^{**}$ fails this property. In this case, is $E$ a closed ideal of $E^{**}$? If yes, our results imply that $E^{**}/E$ fails the positive Schur property. Our next aim is to give such an example.

\begin{proposition}
\label{corquocbid} Let $E$ be a Banach lattice with the positive Schur property such that $E^*$ contains a lattice copy of $\ell_1$. Then $E$ is a closed ideal of $E^{**}$ and $E^{**}$  and $E^{**}/E$ fail the positive Schur property.
\end{proposition}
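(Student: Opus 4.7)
The plan is to prove the three assertions in order, with the 3-lattice property from Theorem~\ref{3latpro} handling the last one once the key structural information about $E^{**}$ is in hand.

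First I would verify that $E$ is a closed ideal of $E^{**}$. Since $E$ has the positive Schur property, the same opening argument as in the proof of Theorem~\ref{3latpro} applies: $E$ cannot contain a lattice copy of $c_0$ (as $c_0$ fails the PSP, a property inherited by Riesz subspaces), so $E$ is a KB-space and therefore has order continuous norm. By \cite[Theorem 4.60]{aliprantis2}, such a Banach lattice embeds as a (projection) band, and in particular as a closed ideal, of $E^{**}$.

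Second I would show that $E^{**}$ fails the positive Schur property. Write $T\colon\ell_1\to E^*$ for the given lattice embedding and put $a_n:=T(e_n)\in E^*_+$. The sequence $(a_n)$ is pairwise disjoint, $(\|a_n\|)$ is bounded above and below, and $(a_n)$ is $\ell_1$-equivalent. Because $E^*$ is Dedekind complete, each $a_n$ generates a projection band $B_n\subseteq E^*$ with positive contractive band projection $P_n$, and disjointness yields $P_n a_m=0$ for $m\neq n$. Pick $\phi_n\in(B_n)^*_+$ with $\phi_n(a_n)=1$ and $\sup_n\|\phi_n\|<\infty$, and set $\mu_n:=\phi_n\circ P_n\in E^{**}_+$; then $\mu_n(a_m)=\delta_{nm}$, $\sup_n\|\mu_n\|<\infty$, and a straightforward band computation in $E^*$ gives $\mu_n\wedge\mu_m=0$ in $E^{**}$ whenever $n\neq m$. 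Using the $\ell_1$-equivalence of $(a_n)$, I would then build from the $\mu_n$ a lattice embedding $\Phi\colon\ell_\infty\to E^{**}$ (sending the $n$-th unit vector of $\ell_\infty$ to $\mu_n$ and extending positively to $\ell_\infty$ via the formula $\Phi(\xi)(\sum c_n a_n)=\sum c_n\xi_n$ on the Riesz subspace $\overline{\mathrm{span}}\{a_n\}\subseteq E^*$ followed by a compatible positive extension), exhibiting $\ell_\infty$ as a closed Riesz subspace of $E^{**}$. Since $\ell_\infty$ fails the PSP (as noted at the end of the proof of Lemma~\ref{coplinf}) and the PSP passes to Riesz subspaces, $E^{**}$ fails the PSP.

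Third, Theorem~\ref{3latpro} applied to the closed ideal $E$ of $E^{**}$ rules out the PSP of $E^{**}/E$: if $E$ and $E^{**}/E$ both had the PSP, so would $E^{**}$ by the 3-lattice property, contradicting the previous step. Hence $E^{**}/E$ also fails the PSP. The main obstacle is in the second step, specifically the verification that the map $\Phi\colon\ell_\infty\to E^{**}$ is not merely positive and bounded but genuinely a Riesz homomorphism with closed range: the Dedekind completeness of $E^*$ and the band-projection apparatus are what force the $\mu_n$ to be pairwise disjoint in $E^{**}$ and what make the positive extensions compatible with the lattice operations of $\ell_\infty$, while the two-sided norm estimate comparing $\|\Phi(\xi)\|$ with $\|\xi\|_\infty$ comes from the identity $\Phi(\xi)(a_n)=\xi_n$ together with the uniform bounds on $(\|a_n\|)$ and $(\|\mu_n\|)$.
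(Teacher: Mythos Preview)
Your first and third steps are exactly the paper's argument. The paper's second step, however, is a one-line citation of a theorem of Wnuk \cite[p.~22]{wnuk}: if $E^*$ contains a lattice copy of $\ell_1$, then $E^{**}$ fails the positive Schur property. You instead attempt to reprove this by manufacturing a lattice embedding $\Phi\colon\ell_\infty\to E^{**}$, and here there is a genuine gap.

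The construction of the disjoint biorthogonal system $(\mu_n)\subset E^{**}_+$ with $\mu_n(a_m)=\delta_{nm}$ is fine, but the passage from the $\mu_n$ to a \emph{Riesz} embedding of $\ell_\infty$ is not. You define $\Phi(\xi)$ first as a functional on $\overline{\mathrm{span}}\{a_n\}$ and then appeal to a ``compatible positive extension'' to all of $E^*$. Positive linear functionals on a closed sublattice need not extend positively to the ambient Banach lattice, and when positive extensions do exist they are not canonical; without a canonical choice you cannot guarantee that $\xi\mapsto\Phi(\xi)$ is even linear, let alone a Riesz homomorphism. Your norm argument also only gives the lower bound: the identity $\Phi(\xi)(a_n)=\xi_n$ controls $\Phi(\xi)$ on the span of the $a_n$ but says nothing about its size on the rest of $E^*$, so the upper bound $\|\Phi(\xi)\|\lesssim\|\xi\|_\infty$ depends entirely on the unspecified extension. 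Finally, it is not clear that $\sup_N\bigl\|\sum_{n\le N}\mu_n\bigr\|<\infty$, which you would need to define $\Phi(\xi)$ as an order supremum in the Dedekind complete lattice $E^{**}$.

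A direct argument that avoids all of this: the positive Schur property forces order continuous norm (you already used this in step one), and a classical result (see, e.g., \cite[Theorem~4.69]{aliprantis2} or \cite{meyer}) says that a dual Banach lattice $F^*$ has order continuous norm if and only if $F$ contains no lattice copy of $\ell_1$. Applying this with $F=E^*$ shows at once that $E^{**}$ fails order continuity, hence fails the positive Schur property. This is essentially what Wnuk's theorem packages.
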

\begin{proof} We have already noticed that Banach latties with the positive Schur property do not contain a lattice copy of $c_0$, so $E$ does not contain a lattice copy of $c_0$, therefore $E$ is a KB-space (we have already used this fact). By  \cite[Theorem 4.60]{aliprantis2} we know that $E$ is a band, in particular a closed ideal, of $E^{**}$. So we can consider the quotient lattice $E^{**} /E$. Since $E^*$ contains a lattice copy of $\ell_1$, from a result due to Wnuk \cite[p. 22]{wnuk} it follows that $E^{**}$ fails the positive Schur property. Corollary \ref{3latpro} gives that $E^{**}/E$ lacks the positive Schur property.
\end{proof}

\begin{example}\rm
  For each $n\in\mathbb{N}$ let $\ell_n^\infty$ denote the Banach lattice $\mathbb{R}^n$ endowed with the maximum norm and coordinatewise order. Now let $E$ denote the $\ell_1$-sum of the sequence $(\ell_n^\infty)_n$, that is
$$
E:=\left( \bigoplus\limits_{n\in\mathbb{N}}\ell_n^\infty\right)_1=\left\{x=(x_n)_n : x_n\in \ell_n^\infty \textrm{~for every~}n\in\mathbb{N} \textrm{~and~} \|x\|:=\displaystyle\sum_{n=1}^\infty\|x_n\|<\infty\right\},
$$
which is a Banach lattice endowed with the coordinatewise order. As a finite dimensional Banach lattice, each $\ell_n^\infty$ has the positive Schur property, so $E$ has the positive Schur property by another result due to Wnuk \cite[p. 17]{wnuk}.

Let us see now that $E^*$ contains a lattice copy of $\ell_1$. To do so, first remember that $(\ell_n^\infty)^*$ is canonically lattice isometric to $\ell_n^1 = (\mathbb{R}^n, \|\cdot\|_1)$. So, by \cite[Theorem 4.6]{aliprantis2} we know that $E^*$ is lattice isometric to
$$
\left( \displaystyle\bigoplus_{n\in\mathbb{N}}\ell_n^1\right)_\infty:=\left\{x=(x_n)_n: x_n\in \ell_n^1 \textrm{~para cada~}n\in\mathbb{N} \textrm{~e~} \|x\|:=\displaystyle\sup_n\{\|x_n\|\}<\infty\right\},
$$
with the coordinatewise order, via the usual duality relation
$$(\varphi_j)_j \longmapsto (\varphi_j)_j\left((x_j)_j \right) = \displaystyle\sum_{j=1}^\infty \varphi_j(x_j). $$
It is plain that
\begin{align*}
T\colon \ell_1 \longrightarrow \left( \displaystyle\bigoplus_{n\in\mathbb{N}}\ell_n^1\right)_\infty~,~  T\left((x_j)_j\right) = \left((x_1),(x_1,x_2),(x_1,x_2,x_3),\ldots \right),
\end{align*}
is a linear isometric embedding. It is also a Riesz homomorphism: 
for $(x_j)_j$ and $(y_j)_j$ in $\ell_1$,
\begin{align*}
T((x_j)_j\vee(y_j)_j) &= T((x_j\vee y_j)_j)= ((x_1\vee y_1),(x_1\vee y_1,x_2\vee y_2), \ldots) \\
&= ((x_1\vee x_2), (x_1,x_2)\vee (y_1,y_2), \ldots)\\
&=((x_1),(x_1,x_2),\ldots)\vee ((y_1),(y_1,y_2),\ldots)= T((x_j)_j)\vee T((y_j)_j).
\end{align*}
It follows that $E^*$ contains a lattice copy of $\ell_1$, then Proposition  \ref{corquocbid} yields that $E$ is a closed ideal of $E^{**}$ and that the Banach lattices
$$ \left( \displaystyle\bigoplus_{n\in\mathbb{N}}\ell_n^\infty\right)_1^{**} {\rm ~and~}
\left( \displaystyle\bigoplus_{n\in\mathbb{N}}\ell_n^\infty\right)_1^{**}\Big/\left( \displaystyle\bigoplus_{n\in\mathbb{N}}\ell_n^\infty\right)_1
$$
lack the positive Schur property.
\end{example}


It is well known that $\ell_1$ is projectively universal for the class of separable Banach spaces in the sense that every separable Banach space is isometric to a quotient of $\ell_1$. The fact that the Schur property is preserved under isomorphisms and Theorem \ref{3latpro} give immediately the following.

\begin{corollary} \label{cor1} Let $E$ be a separable Banach space failing the Schur property. Then any closed subspace $M$ of $\ell_1$ such that $E$ is isomorphic to $\ell_1/M$ fails to be an ideal.
\end{corollary}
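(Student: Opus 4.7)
The plan is to argue by contradiction: assume the closed subspace $M$ of $\ell_1$ is in fact a closed ideal, and then use Theorem \ref{3latpro} to force $E$ to have the Schur property, contradicting the hypothesis.

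First I would recall that $\ell_1$ itself is a Banach lattice with the Schur property. If we assume $M$ is a closed ideal of $\ell_1$, then the quotient $\ell_1/M$ makes sense as a Banach lattice. Since the Schur property passes to closed Riesz subspaces (indeed, to every closed subspace of a Banach space with the Schur property), $M$ inherits the Schur property from $\ell_1$. So two of the three lattices in the triple $\ell_1$, $M$, $\ell_1/M$ have the Schur property.

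Applying Theorem \ref{3latpro}, the third lattice $\ell_1/M$ has the Schur property as well. Now I would invoke the fact that the Schur property is an isomorphism invariant of Banach spaces (it is defined purely in terms of weak and norm sequential convergence, both of which are preserved by linear homeomorphisms). Since $E$ is isomorphic to $\ell_1/M$ as a Banach space, $E$ would inherit the Schur property from $\ell_1/M$, contradicting the assumption that $E$ fails the Schur property. Therefore $M$ cannot be an ideal.

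There is essentially no obstacle here; the entire content is packaged in Theorem \ref{3latpro}. The only small point worth mentioning explicitly is that the isomorphism between $E$ and $\ell_1/M$ is not assumed to be a lattice isomorphism, but this is irrelevant because the Schur property is a purely linear-topological property and is transported by any Banach space isomorphism.
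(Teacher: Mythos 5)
Your proposal is correct and is exactly the paper's argument: the paper derives the corollary immediately from the isomorphism invariance of the Schur property and Theorem \ref{3latpro}, just as you do (with $M$ inheriting the Schur property from $\ell_1$ so that two of the three lattices have it). Your explicit remark that the isomorphism $E \cong \ell_1/M$ need not be a lattice isomorphism is a fair point of care, and consistent with the paper's phrasing.
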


Although $\ell_1$ is a Banach lattice, it is not true that every separable Banach lattice is lattice isometric to a quotient of $\ell_1$ over a closed ideal. For example, $c_0$ is a separable Banach lattice that is not lattice isometric to a quotient of $\ell_1$ over a closed ideal (otherwise $c_0$ would be an AL-space because the quotient of an AL-space over a closed ideal is an AL-space \cite[p. 205]{aliprantis2}). In \cite{leung1}, Leung, Li, Oikhberg and Tursi constructed a separable Banach lattice $LLOT$ such that every separable Banach lattice is lattice isometric to a quotient of $LLOT$ over a closed ideal.

\begin{example}\rm
Since there are separable Banach lattices failing the positive Schur property and this property is preserved by lattice isomorphisms of Banach lattices, Theorem \ref{3latpro} gives that the Banach lattice $LLOT$ lacks the positive Schur property.
\end{example}

\begin{remark}\rm  In \cite{flores}, Flores, Hern\'andez, Spinu, Tradacete and  Troitsky introduced and developed the notion of $p$-disjointly homogeneous Banach lattices, $1 \leq p \leq \infty$. In \cite[Proposition 4.9]{flores} they proved that a Banach lattice is 1-disjointly homogeneous if and only if it has the positive Schur property. So, being a 1-disjointly homogeneous Banach lattice is a 3-lattice property.
\end{remark}

We finish this section with an application of our results to the dual positive Schur property, introduced by Aqzzouz, Elbour and Wickstead \cite{aqzzouz1} as follows: a Banach lattice $E$ has the {\it dual positive Schur property} if every weak$^*$-null sequence formed by positive functionals in $E^*$ is norm null.

Let us see first that the dual positive Schur property is not a 3-lattice property. According to Wnuk \cite[p. 768]{wnuk4}, $\ell_\infty$ and $\ell_\infty/c_0$ have the dual positive Schur property. But $c_0$ is a closed ideal of $\ell_\infty$ lacking the dual positive Schur property (the canonical unit vector form a weak$^*$-null non-norm null sequence in $\ell_1$). We have just mentioned that Wnuk \cite{wnuk4} established that the dual positive Schur property passes to quotients over closed ideals. So, all that is left to be established is the following:

\begin{proposition}\label{propdpsp} If the closed ideal $I$ of the Banach lattice $E$ and the quotient lattice $E/I$ have the dual positive Schur property, so has $E$.
\end{proposition}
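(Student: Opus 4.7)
The plan is to mimic the two-step ``dual'' version of the argument in Theorem \ref{implret}, replacing the quotient map $\pi\colon E\to E/I$ by the restriction map $r\colon E^*\to I^*$ and the ideal $I\subseteq E$ by the band $I^\circ\subseteq E^*$. First I recall the standard duality for a closed ideal $I$ of a Banach lattice $E$: (a) $I^\circ=\{\varphi\in E^*:\varphi|_I=0\}$ is a (projection) band in $E^*$, hence an ideal, so it is closed under the positive-part operation; (b) restriction $r(\varphi)=\varphi|_I$ is a Riesz homomorphism from $E^*$ onto $I^*$ with $\ker r=I^\circ$, inducing an isometric lattice isomorphism $E^*/I^\circ\cong I^*$, so $\|\varphi|_I\|_{I^*}=\inf\{\|\varphi-\psi\|:\psi\in I^\circ\}$; (c) the adjoint $q^*$ of the quotient map $q\colon E\to E/I$ is an isometric lattice embedding of $(E/I)^*$ onto $I^\circ$, and $w^*$-convergence in $(E/I)^*$ corresponds under $q^*$ exactly to $w^*$-convergence in $E^*$ of sequences landing in $I^\circ$.

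Now take a positive $w^*$-null sequence $(\varphi_n)\subseteq E^*$; the goal is $\|\varphi_n\|\to 0$. Restricting to $I$, the sequence $(\varphi_n|_I)$ is positive in $I^*$ and $w^*$-null (since $\varphi_n|_I(x)=\varphi_n(x)\to 0$ for every $x\in I$), so the DPSP of $I$ yields $\|\varphi_n|_I\|_{I^*}\to 0$. Using (b), for each $n$ choose $\psi_n\in I^\circ$ with $\|\varphi_n-\psi_n\|\to 0$. Since $I^\circ$ is an ideal, $\psi_n^+\in I^\circ$; because $\varphi_n\geq 0$, the Riesz-space inequality $|a^+-b^+|\leq|a-b|$ gives $|\varphi_n-\psi_n^+|=|\varphi_n^+-\psi_n^+|\leq|\varphi_n-\psi_n|$, so $\|\varphi_n-\psi_n^+\|\to 0$ as well.

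Next I observe that $\psi_n^+=\varphi_n+(\psi_n^+-\varphi_n)$ is the sum of a $w^*$-null sequence and a norm-null sequence, hence $\psi_n^+\stackrel{w^*}{\to}0$ in $E^*$. Transferring via the lattice isometry (c), the sequence $(\psi_n^+)$ becomes a positive $w^*$-null sequence in $(E/I)^*$, so the DPSP of $E/I$ forces $\|\psi_n^+\|_{(E/I)^*}\to 0$; by the isometry, $\|\psi_n^+\|_{E^*}\to 0$. The triangle inequality $\|\varphi_n\|\leq\|\varphi_n-\psi_n^+\|+\|\psi_n^+\|$ then finishes the proof. The only non-trivial checks are the three standard duality facts (a)--(c) above, and in particular that $I^\circ$ is an ideal so that the passage $\psi_n\rightsquigarrow\psi_n^+$ stays inside $I^\circ$; this is where the argument would break down for an arbitrary closed subspace, and is precisely what makes ``closed ideal'' the right hypothesis.
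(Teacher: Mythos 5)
Your proof is correct and takes essentially the paper's route: the paper proves this proposition by applying Theorem \ref{implret} to the dual lattice $E^*$ with $\tau_1=\sigma(E^*,E)$, $\tau_2$ the norm topology, and the closed ideal $I^\perp$, using exactly your identifications $I^*\cong E^*/I^\perp$ and $(E/I)^*\cong I^\perp$. You have in effect inlined the proof of Theorem \ref{implret} in this dual setting --- including its key positive-part step $|\varphi_n-\psi_n^+|=|\varphi_n^+-\psi_n^+|\leq|\varphi_n-\psi_n|$ --- with the mild simplification that the quotient-norm distance formula lets you choose the approximants $\psi_n$ directly, so you avoid both the subsequence bookkeeping and the identification of the quotient of the weak$^*$ topology with $\sigma(I^*,I)$.
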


\begin{proof} For a Banach space $X$, let us call $\sigma(X^*,X)$ the weak$^*$-topology on $X^*$ and $\|\cdot\|_{X^*}$ the norm topology on $X^*$. So,  
\begin{itemize}
\item $E$ has the dual Schur positive poperty if and only if $E^*$ has the ($\sigma(E^*,E), \|\cdot\|_{E^*} )$-PSP;
    \item $I$ has the dual Schur positive poperty if and only if $I^*$ has the $(\sigma(I^*,I),\|\cdot\|_{I^*})$-PSP;
    \item $E/I$ has the dual Schur positive poperty if and only if $(E/I)^*$ has the $(\sigma((E/I)^*,E/I),\|\cdot\|_{(E/I)^*})$-PSP.
\end{itemize}
Since the annihilator $I^\perp$ of  $I$ is a closed ideal of $E^*$ \cite[Corollary 1 of Proposition II.5.5]{schaefer1}, combining \cite[Theorems V.2.2 and V.2.3]{conway} and \cite[Corollary 1 of Proposition II.5.5]{schaefer1} we get

\begin{itemize}
    \item  $I^*$ has the $(\sigma(I^*,I),\|\cdot\|_{I^*})$-PSP if and only if $E^*/I^\perp$ has the $\left(\dot{[\sigma(E^*,E)]},\dot{\|\cdot\|}_{E^*}\right)$-PSP, where $\dot{[\sigma(E^*,E)]}$ is the quotient topology associated to the quotient operator $(E^*, \sigma(E^*,E)) \longrightarrow E^*/I^\perp$, and
    \item $(E/I)^*$ has the $(\sigma((E/I)^*,E/I),\|\cdot\|_{(E/I)^*})$-PSP if and only if $I^\perp$ has the $(\sigma(E^*,E),\|\cdot\|_{E^*})$-PSP.
\end{itemize}
The assumptions give that $I^\perp$ has the $(\sigma(E^*,E),\|\cdot\|_{E^*})$-PSP and that the quotient lattice $E^*/I^\perp$ has the $(\dot{[\sigma(E^*,E)]},\dot{\|\cdot\|}_{E})$-PSP, therefore Theorem \ref{implret} yields that $E^*$ has the $(\sigma(E^*,E),\|\cdot\|_{E^*})$-PSP, that is, $E$ has the dual positive Schur property.
\end{proof}


\section{The weak Dunford-Pettis property}

The weak Dunford-Pettis property was introduced by Leung \cite{leung} as follows: a Banach lattice $E$ has the {\it weak Dunford-Pettis property} if every weakly compact operator on $E$ sends weakly null sequences formed by pairwise disjoint vectors to norm null sequences. In \cite{castillo}, the authors mention that it was then unknown if this property is a 3-lattice property. As far as we know, this problem remains open, and the purpose of this short final section is to show that our methods can be used to give a contribution in this direction.

Following the approach of Gabriyelyan \cite{gabriyelyan}, we give the following definition in the context of Fr\'echet lattices.

\begin{definition}\rm A Fr\'echet lattice $E$ has the {\it sequential weak Dunford-Pettis property} if $\varphi_n(x_n)\longrightarrow 0$ whenever $(x_n)_n$ is a weakly null sequence in $E$ formed by positive vectors and $(\varphi_n)_n$ is a weakly null sequence in the strong dual $E^*_\beta$ of $E$.
\end{definition}

The next result was inspired by \cite[Proposition 6.6.c]{castillo} and its proof repeats some of the steps of the proof of Theorem \ref{implret}.

\begin{theorem}\label{lth}
Let $I$ be a closed ideal of the Fr\'echet lattice $(E,\tau)$. If $I$ has the sequential weak Dunford-Pettis property and $E/I$ has the $(\dot w_\tau,\dot \tau)$-PSP, then $E$ has the sequential weak Dunford-Pettis property.
\end{theorem}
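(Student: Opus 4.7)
The plan is to mimic the architecture of the proof of Theorem \ref{implret}, decomposing each $x_n$ into a summand in $I$ and a $\tau$-small remainder, and then handling the two halves of $\varphi_n(x_n)$ separately. Fix a weakly null sequence $(x_n)_n$ of positive vectors in $E$ and a weakly null sequence $(\varphi_n)_n$ in $E^*_\beta$; we must show $\varphi_n(x_n)\to 0$. The quotient map $\pi\colon (E,\tau)\to (E/I,\dot\tau)$ is a continuous Riesz homomorphism, so $(\dot x_n)_n$ is weakly null and positive in $E/I$; the hypothesis on $E/I$ then yields $\dot x_n \stackrel{\dot\tau}{\longrightarrow} 0$.

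Next, fix an invariant metric $d$ generating $\tau$ and its associated quotient metric $\dot d$. For each $n$, choose $y_n\in I$ with $d(x_n-y_n,0) < \dot d(\dot x_n, 0) + 1/n$, so that $x_n - y_n \to 0$ in $\tau$. Representing $\tau$ by a countable family of Riesz seminorms and using $|x_n - y_n^+| \leq |x_n - y_n|$ exactly as in the proof of Theorem \ref{implret}, we obtain $x_n - y_n^+ \to 0$ in $\tau$, hence also weakly in $E$. Combined with $x_n\to 0$ weakly, this gives $y_n^+ \to 0$ weakly in $E$; by Hahn--Banach the weak topology of $I$ is the restriction of the weak topology of $E$, so $(y_n^+)_n$ is weakly null in $I$.

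It remains to decompose $\varphi_n(x_n) = \varphi_n(y_n^+) + \varphi_n(x_n - y_n^+)$ and control each term. The restriction operator $R\colon E^*_\beta \to I^*_\beta$, $R(\varphi) = \varphi|_I$, is continuous because every bounded subset of $I$ is bounded in $E$; being linear and continuous it is weak--weak continuous, so $(\varphi_n|_I)_n$ is weakly null in $I^*_\beta$, and the sequential weak Dunford--Pettis property of $I$ gives $\varphi_n(y_n^+) \to 0$. For the other term, the crucial point is that $(E,\tau)$, being Fr\'echet, is barrelled, so a weakly null sequence in $E^*_\beta$ is strongly bounded and hence equicontinuous; equicontinuity applied to the $\tau$-null sequence $(x_n-y_n^+)_n$ then forces $\varphi_n(x_n-y_n^+)\to 0$. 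This equicontinuity step is where the Fr\'echet structure of $E$ is essential and is the main obstacle; without it, the term $\varphi_n(x_n-y_n^+)$ cannot be estimated from $\tau$-smallness of $(x_n-y_n^+)$ together with mere weak convergence of $(\varphi_n)$.
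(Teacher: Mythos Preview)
Your proof is correct and follows the same strategy as the paper's: push $(x_n)$ to the quotient, use the $(\dot w_\tau,\dot\tau)$-PSP there, lift to get $x_n - y_n^+ \stackrel{\tau}{\to} 0$ with $y_n^+\in I$, apply the sequential weak Dunford--Pettis property of $I$ to handle $\varphi_n(y_n^+)$, and use equicontinuity of $(\varphi_n)$ (via barrelledness/Banach--Steinhaus) to handle $\varphi_n(x_n-y_n^+)$. The only difference is cosmetic: the paper passes to an arbitrary subsequence and then extracts a further subsequence along which the $y_{N_k}$ are chosen, whereas you pick $y_n\in I$ with $d(x_n-y_n,0)<\dot d(\dot x_n,0)+1/n$ for every $n$ directly, which streamlines the argument but is not a different idea.
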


\begin{proof} Given a weakly null sequence $(x_n)_n$ of positive vectors of $E$  and a weakly null sequence $(\varphi_n)_n$ in $E^*_\beta$, let $(\varphi_{n_j}(x_{n_j}))_j$ be an arbitrary subsequence of  $(\varphi_n(x_n))_n$. Considering the quotient operator $\pi\colon (E,\tau)\longrightarrow (E/I,\dot\tau)$, we have that $(\dot x_{n_j})_j=(\pi(x_{n_j}))_j$ is a sequence of positive vectors in $E/I$ and $\dot x_{n_j}\stackrel{\dot w_\tau}{\longrightarrow} 0$ in $E/I$. The $(\dot w_\tau, \dot\tau)$-PSP of $E/I$ gives $\dot x_{n_j}\stackrel{\dot \tau}{\longrightarrow}0$ in $E/I$ and reasoning similarly to the proof of Theorem \ref{implret} we can find a subsequence $(x_{n_{N_k}})_k$ of $(x_{n_j})_j$ and a sequence $(y_{N_k})_k$ of positive vectors of $I$ such that $x_{n_{N_k}}-y_{N_k}\stackrel{\tau}{\longrightarrow}0$. Hence,  $x_{n_{N_k}}-y_{N_k}\stackrel{w_\tau}{\longrightarrow}0$ in $E$ and from the linearity of the weak topology we get $$y_{N_k}=x_{n_{N_k}}-(x_{n_{N_k}}-y_{N_k})\stackrel{w_\tau}{\longrightarrow}0$$ in $E$, thus in $I$. Since $(\varphi_n)_n$ is weakly null in $E^*_\beta$, denoting by $\varphi_{n|_I}$ the restriction of each $\varphi_n$ to $I$ and applying  \cite[Theorem 8.11.3(d) and (c)]{narici} to the inclusion operator $(I, \tau) \longrightarrow (E, \tau)$ and its adjoint $E^*_\beta \longrightarrow I^*_\beta$ , we conclude that  $(\varphi_{n|_I})_n$ is weakly null in the strong dual $I^*_\beta$ of $I$. The sequential weak Dunford-Pettis property of $I$ gives $\varphi_{n_{{N_k}|_I}}(y_{N_k})\longrightarrow 0$, that is,  $\varphi_{n_{N_k}}(y_{N_k})\longrightarrow 0$.

On the other hand, the sequence $(\varphi_{n_{N_k}})_k$ is pointwise bounded because it is weakly null in $E^*_\beta$, therefore it is equicontinuous by the Banach-Steinhaus Theorem \cite[Theorem 11.9.1]{narici} (remember that $E$ is a Fr\'echet space). So, given $\varepsilon > 0$ there is a 0-neighborhood $U$ in $E$ such that $|\varphi_{n_{N_k}}(x)| < \varepsilon$ for every $x \in U$. Now the convergence $x_{n_{N_k}}-y_{N_k}\stackrel{\tau}{\longrightarrow}0$ implies that $\varphi_{n_{N_k}}(x_{n_{N_k}}-y_{N_k}) \longrightarrow 0$, therefore
 $$\varphi_{n_{N_k}}(x_{n_{N_k}})=\varphi_{n_{N_k}}(x_{n_{N_k}}-y_{N_k})+ \varphi_{n_{N_k}}(y_{N_k})\longrightarrow 0.$$  This proves that $\varphi_n(x_n)\longrightarrow 0$, and then $E$ has the sequential weak Dunford-Pettis property.
\end{proof}

\begin{corollary} Let $I$ be a closed ideal of the Banach lattice $E$. If $I$ has the weak Dunford-Pettis property and $E/I$ has the positive Schur property, then $E$ has the weak Dunford-Pettis property.
\end{corollary}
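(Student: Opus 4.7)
The plan is to deduce the corollary from Theorem \ref{lth} applied with $\tau$ the norm topology on $E$, after translating both the hypotheses and the conclusion from the Banach--lattice language (WDP and PSP) into the Fr\'echet--lattice language used in Theorem \ref{lth}.

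For the quotient hypothesis, with $\tau=\|\cdot\|$ the quotient topology $\dot\tau$ on $E/I$ is the usual quotient norm, and, as recorded earlier in the paper via \cite[Theorem 8.12.3(a)]{narici}, the quotient weak topology $\dot w_\tau$ coincides with the weak topology $w_{\dot\tau}$ of the Banach lattice $E/I$. Hence the positive Schur property of $E/I$ is exactly the $(\dot w_\tau,\dot\tau)$-PSP required in Theorem \ref{lth}. For the WDP hypothesis, I would invoke the classical sequential characterization of the weak Dunford--Pettis property (see, e.g., \cite{aliprantis2}): a Banach lattice $F$ has WDP if and only if $\varphi_n(y_n)\to 0$ whenever $(y_n)\subseteq F_+$ is weakly null and $(\varphi_n)\subseteq F^*$ is weakly null. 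Since for a Banach lattice the strong dual $F^*_\beta$ is the Banach space dual $F^*$, this is precisely the definition of the sequential weak Dunford--Pettis property given in this section. In particular, ``$I$ has WDP'' is equivalent to ``$I$ has the sequential weak Dunford--Pettis property'', and likewise for $E$.

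With both translations in place the conclusion is immediate: Theorem \ref{lth} gives that $E$ has the sequential weak Dunford--Pettis property, and the same equivalence, now read for $E$, yields that $E$ has the weak Dunford--Pettis property. The main (and really the only non--routine) point is the sequential characterization of WDP; this is standard, but should a self--contained argument be desired, one direction proceeds by forming the weakly compact operator $T\colon E\to c_0$, $T(x)=(\varphi_n(x))_n$, and applying WDP after a suitable disjointification of a weakly null positive sequence, while the converse uses Hahn--Banach to extract, for a weakly compact $T\colon E\to X$ and a disjoint weakly null $(y_n)\subseteq E_+$, functionals $\varphi_n\in E^*$ with $\varphi_n(y_n)=\|Ty_n\|$ that are weakly null by the relative weak compactness of $T^*(B_{X^*})$.
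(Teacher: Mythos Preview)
Your proposal is correct and follows essentially the same route as the paper: both identify the Banach-lattice weak Dunford--Pettis property with the sequential weak Dunford--Pettis property (using that the strong dual of a Banach space is its norm dual), note that the positive Schur property of $E/I$ is exactly the $(\dot w_\tau,\dot\tau)$-PSP, and then invoke Theorem~\ref{lth}. The only difference is cosmetic: the paper cites \cite{aqzzouz} for the sequential characterization of WDP, while you cite \cite{aliprantis2} and sketch an independent argument.
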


\begin{proof} Since the strong topology $\beta(E^*,E)$ on the dual $E^*$ of a Banach space $E$ coincides with the norm  topology \cite[Example 8.8.9]{narici}, the weak sequential Dunford-Pettis property coincides with the weak Dunford-Pettis property in a Banach lattice (see \cite[Corollary 2.6]{aqzzouz}), so the result follows from Theorem \ref{lth}. 
\end{proof}

\medskip

\noindent{\bf Acknowledgement.} The authors are grateful to Khazhak V. Navoyan for her helpful suggestions.

\bigskip

\noindent Faculdade de Matem\'atica~~~~~~~~~~~~~~~~~~~~~~Departamento de Matem\'atica\\
Universidade Federal de Uberl\^andia~~~~~~~~ IMECC-UNICAMP\\
38.400-902 -- Uberl\^andia -- Brazil~~~~~~~~~~~~ 13.083-859 - Campinas -- Brazil\\
e-mail: botelho@ufu.br ~~~~~~~~~~~~~~~~~~~~~~~~~e-mail: lucasvt09@hotmail.com

\end{document}